\newcommand{\hide}[1]{}
\title{Curvature accelerated decentralized non-convex optimization for high-dimensional machine learning problems}
\name{Dingran Yi, Fanhao Zeng, and Nikolaos M. Freris\thanks{School of Computer Science and Technology, University of Science and Technology of China, Hefei, Anhui, 230027, China. 
Emails: \texttt{ydr0826@mail.ustc.edu.cn,~sa23011002@mail.ustc.edu.cn, nfr@ustc.edu.cn}.}}
\address{}
\begin{document}
%\ninept
%
\maketitle
\begin{abstract}
We consider distributed optimization as motivated by machine learning in a multi-agent system: each agent holds local data and the goal is to minimize an aggregate loss function over a common model, via an interplay of local training and distributed communication. In the most interesting case of training a neural network, the loss functions are non-convex and the high dimension of the model poses challenges in terms of communication and computation. We propose a primal-dual method that leverages second order information in the local training sub-problems in order to accelerate the algorithm. To ease the computational burden, we invoke a quasi-Newton local solver with linear cost in the model dimension. Besides, our method is communication efficient in the sense of requiring to broadcast the local model only once per round. We rigorously establish the convergence of the algorithm and demonstrate its merits by numerical experiments.
\end{abstract}

\begin{keywords}
Distributed optimization, second-order acceleration, non-convex problems
\end{keywords}

\section{introduction}

Distributed optimization has found many applications in fields such as transportation \cite{hajebrahimi2020adaptive}, power systems \cite{molzahn2017survey} and control \cite{nedic2018distributed}. Agents cooperate to minimize a sum of local cost functions:
\begin{equation}
    \underset{x\in\mathbb{R}^d}{\text{minimize}}\qquad \sum_{i=1}^m f_i(x) , \label{prob1}
\end{equation} 
where $f_i(\cdot)$ is the cost function (referred to as loss function in the machine learning literature) held by agent $i$. Distributed optimization algorithms operate as follows: agents store and update local models $x_i$ through a synergy of computation on $f_i$ as well as exchanging information with neighbors so that (\ref{prob1}) is solved asymptotically with $\left\{x_i\right\}$ reaching consensus.

There is an abundance of methods for both the agent-server setting \cite{mcmahan2017communication,9835545,karimireddy2020scaffold} and the serverless setting on a general communication graph \cite{nedic2017achieving,bastianello2020asynchronous}. However, the underlying analysis for serverless setting assumes convexity which is not applicable for the problem we target in this paper, namely training neural networks. Meanwhile, the majority use first order solvers for the corresponding local problems because of the low cost and simple implementation. However, first order methods generally suffer from slow convergence \cite{juditsky5}. There are also several works that invoke curvature information (i.e., Newton-type methods) to attain acceleration \cite{liu2023communication,crane2020dino,li2023communication,zhou2011quasi}. Nonetheless, this leads to a substantial increase of per round computation and communication (as well as storage), which renders them unattractive or impractical for problems where the model dimension $d$ is very large.

We propose a decentralized primal-dual method termed CADEN (Curvature Accelerated DEcentralized Non-convex) that applies to non-convex problems with high dimension. It is based on the communication-efficient implementation of ADMM (Alternating Direction Method of Multipliers) developed in \cite{li2023communication}. However, our method is different in terms of three key aspects: a) it applies to non-convex problems, b) it specifically targets high dimension $d$, and c) it allows variable workload across agents. In specific, we invoke L-BFGS for (inexactly) solving the local sub-problems, which can accelerate the convergence while maintaining acceptable computation cost ($\mathcal{O}(d)$). In contrast, existing methods for non-convex problems either support only first order solvers \cite{alghunaim2019linearly}, or become prohibitively costly when the dimension is large because they require storing or communicating Hessian matrices \cite{mokhtari2016network,tutunov2019distributed}. Besides, all aforementioned methods are synchronous (i.e., they require the participation of all agents in each round) and assume a common workload across agents. As a consequence, they are not suitable for scenarios with pronounced system heterogeneity (i.e., when agents become unavailable due to low energy or bandwidth and also have diverse computing power).\\   
\textbf{Contributions}:
\begin{enumerate}
\item We develop a primal-dual method for high-dimensional non-convex problems with three key attributes. First, it incorporate the curvature in a computationally efficient manner (using L-BFGS as the local solver) to accelerate the convergence. Second, it is communication-efficient in the sense of requiring a single broadcast of the local model per round. Third, it allows partial participation and variable local workload (across agents and rounds).
\item We provide a theoretical analysis that establishes convergence to stationary points with a rate of $\mathcal{O}\left(\frac{1}{T}\right)$, where $T$ is the number of rounds.
Our analysis reveals the acceleration obtained by L-BFGS compared to using a first order local solver. Besides, it characterizes the convergence rate in terms of local work, partial participation, communication topology, and conditioning of the loss functions.
\item Experiments on training a neural network in the multi-agent setting demonstrate advantages in terms of superior convergence rate with noticeable communication and computation savings. 
\end{enumerate}
\section{Proposed Method}
\subsection{Problem reformulation}
We begin by re-formulating (\ref{prob1}) in a means that is suitable for obtaining a distributed method that can incorporate second-order information in the local problems while maintaining communication efficiency \cite{li2023communication}. We consider a connected undirected graph $G=\left\{\mathcal{V},\mathcal{E}\right\}$ where $\mathcal{V}=\left[m\right]:=\{1,\hdots,m\}$ is the set of agents and $\mathcal{E}\subset \mathcal{V}\times\mathcal{V}$ is the set of edges, with $\left(i,j\right)\in\mathcal{E}$ if and only if agent $i$ can communicate with agent $j$. We define the set of neighbors of agent $i$ as $\mathcal{N}_i:=\left\{j| (i,j)\in\mathcal{E}\right\}$. Furthermore, $n:=\lvert\mathcal{E}\rvert$ denotes the number of edges and $d_i:=\lvert\mathcal{N}_i\rvert$ denotes the degree of agent $i$. Problem (\ref{prob1}) is equivalent to
\begin{align}
    \underset{x_i,z_{ij}\in\mathbb{R}^d}{\text{minimize}}\qquad &\sum_{i=1}^m f_i(x_i),\notag\\
    \text{s.t.}\qquad x_i=&z_{ij}=x_j,\,\, j\in\mathcal{N}_i.\label{prob2}
\end{align}
The intermediate variables $\left\{z_{ij}\right\}$ are to attain a block-diagonal Hessian for the augmented Lagrangian, which is key to communication efficiency (i.e., no need for multiple rounds of distributed message passing) when curvature is invoked. This can be compactly written as
\begin{align}
\underset{x\in\mathbb{R}^{md},z\in\mathbb{R}^{nd}}{\text{minimize}}\qquad &F(x),\notag\\
\text{s.t.}\qquad Ax=&Bz,\label{prob3}
\end{align}where $F(x):=\sum_{i=1}^m f_i(x_i)$, $x$ and $z$ concatenate the variables $\left\{x_i\right\},\left\{z_{ij}\right\}$ respectively; $A:=\begin{bmatrix}\hat{A}_s\otimes I_d\\\hat{A}_d\otimes I_d\end{bmatrix}$ (where $\hat{A}_s, \hat{A}_d$ are $n\times m$ binary matrices with $\left[\hat{A}_s\right]_{ki}=\left[\hat{A}_d\right]_{kj}=1$ if and only if $(i,j)$ is the $k$-th edge and $0$ otherwise), and $ B:=\begin{bmatrix}I_{nd}\\I_{nd}\end{bmatrix}$. The augmented Lagrangian for (\ref{prob3}) is
\begin{equation}
\mathcal{L}(x,z,y)=F(x)+y^\top(Ax-Bz)+\frac{\mu_z}{2}\norm{Ax-Bz}^2,\label{AL}
\end{equation}where $y$ is the dual variable and $\mu_z>0$ is the quadratic coefficient. The iterations
of ADMM are given by sequential alternating minimization
of the AL over $x, z$ plus a dual ascent step of $y$. 
\subsection{The proposed method}
The iterations of ADMM can be carried distributively as follows:
\begin{subequations}
\begin{align}
    x_i^{t+1}=\,&\underset{x_i}{\text{arg min}}\,\, \Big\{f_i(x_i)\notag\\&+\sum_{j\in\mathcal{N}_i}\left( (y_{ij,i}^t)^\top (x_i-z_{ij}^t)+\frac{\mu_z}{2}\norm{x_i-z_{ij}^t}^2\right)\Big\},\label{name_x}\\
    z_{ij}^{t+1}=\,&\frac{1}{2}\left(x_i^{t+1}+x_j^{t+1}+\frac{1}{\mu_z}(y_{ij,i}^t+y_{ij,j}^t)\right),\label{name_z}\\y_{ij,i}^{t+1}=\,&y_{ij,i}^{t}+\mu_y(x_i^{t+1}-z_{ij}^{t+1}),\label{name_y}
\end{align}\label{seem5}
\end{subequations}
where $\mu_y>0$ is the dual ascent parameter. Here we adopt a new parameter $\mu_y$ instead of directly using $\mu_z$ as standard ADMM to for establishment of theoretical proof. Nevertheless, $y,z-$variables pertain to edges: this not only incurs additional storage costs, but it also makes partial participation problematic (if an agent is active it is expected to update all its adjacent edge variables, however, some of its neighbors may be inactive at this round). For this reason, we proceed to eliminate all edge variables to obtain a method with agent-specific variables only. By proper initialization, i.e., $y_{ij,i}^0+y_{ij,j}^0=0$, we may first conclude by induction that $y_{ij,i}^t+y_{ij,j}^t=0$ for all $t$ and (\ref{name_z}) can be simplified as $z_{ij}^{t}=\frac{1}{2}\left(x_i^{t}+x_j^{t}\right)$. Then, by replacing the expression for $z_{ij}^t$ and defining $\phi_i^t:=\sum_{j\in\mathcal{N}_i}y_{ij,i}^t$, it is easy to inspect that (\ref{name_x}) is equivalent with
\begin{footnotesize}
\begin{align}
    x_i^{t+1}=\underset{x_i}{\text{arg min}}\left(f_i(x_i)+(\phi_i^t)^\top x_i+\frac{\mu_z}{2}\sum_{j\in\mathcal{N}_i}\left\|x_i-\frac{x_i^t+x_j^t}{2}\right\|^2\right).\label{simp_x}
\end{align}
\end{footnotesize}
Finally, from the definition of $\phi_i^t$ and (\ref{name_y}), we obtain the update rule for $\phi$ as $$\phi_i^{t+1}=\phi_i^t+\frac{\mu_y}{2}\sum_{j\in\mathcal{N}_i}\left(x_i^{t+1}-x_{j}^{t+1}\right).$$
\begin{algorithm}[t]
	\caption{CADEN} 
	\textbf{Initialization}: zero initialization for $\phi$
%	\textbf{Local updates}
	\begin{algorithmic}[1]
		\For {$t=0,1,2,\hdots$}
  \For {active agent~$i$}
		\Statex \texttt{primal update}:
\State{$x^{t+1}_i=\texttt{lbfgs}\,\left(f_i(x_i)+\left(\phi_i^t\right)^{\top} x_i\right.$}
        \Statex \qquad\qquad\qquad\qquad\qquad$\left.+\frac{\mu_z}{2}\sum_{j\in\mathcal{N}_i}\norm{x_i-\frac{x_i^t+x_j^t}{2}}^2\right)$
        \Statex \texttt{communication}:
        \State{broadcast $x_i^{t+1}$ to neighbors}
        \Statex \texttt{dual update}:
        \State{$\phi_i^{t+1}=\phi_i^t+\frac{\mu_y}{2}\sum_{j\in\mathcal{N}_i}\left(x_i^{t+1}-x_j^{t+1}\right)$}
        \EndFor
		\EndFor 
	\end{algorithmic} 
\end{algorithm}

The algorithmic description is provided in Alg. 1. In CADEN, all variables correspond to agents: $x_i$ represents the local model and $\phi_i$ represents the dual variable for agent $i$. Partial participation is reflected in step~2. In step~3, active agents update (in parallel) their local models $x_i$ by inexactly solving local problem (\ref{simp_x}) using a (variable) number of local rounds of L-BFGS. Next, they broadcast the updated models to their neighbors (step~4). In case a neighbor is inactive, we assume the broadcast information can be stored in a buffer for the neighbor to access when becoming active again. Last, the dual variables are updated in step 5 based on the models received by the agents (notice that agent $i$ has received the needed information $\left\{x_j^{t+1}\right\}_{j \in\mathcal{N}_i}$ by step~4; in case a neighbor $j$ is inactive in this round, $x_j^{t+1}\equiv x_j^t $, so again this is available from past broadcasts).
\section{Convergence Analysis}
The analysis is under the following assumptions and the proofs are deferred to the appendix.
\newtheorem{assumption}{Assumption}
\begin{assumption}
Each local loss function is $L$-Lipschitz smooth, i.e., $\forall x, x'\in\mathbb{R}^d, i\in \left[m\right]$$$\norm{\nabla f_i(x)-\nabla f_i(x')}\leq L\norm{x-x'}.$$
\end{assumption}
\begin{assumption}
The local cost functions in (\ref{prob1}) are lower bounded, i.e., $\sum_{i=1}^m f_i(x_i)\geq F^\star$ for some $F^\star$ for all $\left\{x_i\right\}$.
\end{assumption}
We carry out the analysis with the following Lyapunov function 
\begin{align}
V^t=&\sum_{i=1}^m\norm{\nabla f_i(x_i^{t})+\sum_{j\in\mathcal{N}_i}y_{ij,i}^{t}}^2+\sum_{i=1}^m\sum_{j\in\mathcal{N}_i}\norm{x^t_i-z_{ij}^t}^2\notag\\=&\sum_{i=1}^m\norm{\nabla f_i(x_i^{t})+\phi_i^t}^2+\frac{1}{4}\sum_{i=1}^m\sum_{j\in\mathcal{N}_i}\norm{x_i^t-x_j^t}^2,\label{Vt}
\end{align}based on the equivalence of our method and (\ref{seem5}). Note that $V^t=0$ if and only if $\nabla f_i(x_i^{t})+\phi_i^t=0,\forall i\in [m]$ and $x_i^t-x_j^t=0,\forall (i,j)\in \mathcal{E}$, which means $x_1^t=\cdots=x_m^t$ (consensus) and $\sum_{i=1}^m\nabla f_i(x_i^{t})=0$ (this is because $y_{ij,i}^t+y_{ij,j}^t=0$ leads to  $\sum_{i=1}^m\phi_i^t=0$ for all $t$). This corresponds to a stationary point of (1).

By choosing $\mu_z > L$, in view of Assumption 1 it follows that the objective in (\ref{simp_x}) becomes strongly convex. Therefore, the analysis of \cite{liu1989limited} suggests that applying L-BFGS for (\ref{simp_x}) features linear convergence (with the number of iterations), and we use $r\in (0,1)$ to denote the rate parameter. We let $\tau$ denote the number of L-BFGS iterations in step 3 of Alg. 1 (this is allowed to be agent-specific, and we take the smallest number across users as $\tau$ for simplicity).
\newtheorem{remark}{Remark}
\newtheorem{theorem}{Theorem}
\begin{theorem}
Under Assumptions 1-2, by defining  $e^0=\left[\left(\nabla_{x_1} \mathcal{L}^0(x_1^0)\right)^\top,\hdots ,\left(\nabla_{x_m} \mathcal{L}^0(x_m^0)\right)^\top\right]^\top$, assuming that each agent is active with probability $p_i$ independently and $0~<~p_{\text{min}}:=\underset{i\in[m]}{\min} p_i$, choosing $\mu_z\geq1+2L, \mu_y\geq\frac{1152d_{\text{max}}^2\mu_z\lambda_{\text{max}}}{\lambda^2_{\text{min}}p_{\text{min}}}$ and $\tau$ such that $r^\tau\leq\frac{\lambda^2_{\text{min}}p_{\text{min}}}{4608d_{\text{max}}^4\mu_z\lambda_{\text{max}}}$,  the sequence generated by Algorithm~1 satisfies
\begin{align*}
\frac{1}{T}\sum_{t=0}^{T-1}\mathbb{E}\left[V^t\right]\leq\frac{1}{T}\left(C_1\mathbb{E}\left(\mathcal{L}^0- \mathcal{L}^T\right)+C_{2}\norm{e^0}^2\right),
\end{align*}where $\mathcal{L}^0=\mathcal{L}\left(x^0,y^0,z^0\right)$ and $\mathbb{E}\left[\mathcal{L}^T\right]$ is uniformly lower bounded. $C_1=\text{max}\left\{\frac{C_{8}}{C_3},\frac{C_7}{C_4}\right\}, C_{2}=C_6+C_1C_5$ are positive and $C_3$ to $C_{8}$ are as follows:
\begin{align*}
C_3=&\mu_z-2r^\tau\frac{\hat{C}_2}{\hat{C}_1}-\frac{2r^\tau\hat{C}_2\mu_y^2\mu_z^2}{\hat{C}_1(1-\hat{C}_4d_{\text{max}}^2)}\left(\frac{36\lambda_{\text{max}}}{\lambda^2_{\text{min}}\mu_y^2}+\hat{C}_4\right)\\&-\frac{\mu_z^2\mu_y}{1-\hat{C}_4d_{\text{max}}^2}\left(\hat{C}_4+\frac{36\lambda_{\text{max}}}{\lambda^2_{\text{min}}\mu_y^2}\right),\\
C_4=&p_{\text{min}}\frac{2\mu_z-1-2L}{4}-\frac{36r^\tau\hat{C}_2\lambda_{\text{max}}\mu_z^2(d_{\text{max}}^2\mu_z^2+L^2)}{\lambda^2_{\text{min}}\hat{C}_1(1-\hat{C}_4d_{\text{max}}^2)}\\&-\frac{18\lambda_{\text{max}}\mu_z^2(d_{\text{max}}^2\mu_z^2+L^2)}{\lambda^2_{\text{min}}\mu_y(1-\hat{C}_4d_{\text{max}}^2)},\\
C_5=&\frac{1}{\hat{C}_1}+\frac{12r^\tau\hat{C}_2\lambda_{\text{max}}(1+\hat{C}_3)}{\hat{C}_1^2\lambda^2_{\text{min}}(1-\hat{C}_4d_{\text{max}}^2)}+\frac{6\lambda_{\text{max}}(1+\hat{C}_3)}{\hat{C}_1\lambda^2_{\text{min}}\mu_y(1-\hat{C}_4d_{\text{max}})},\\
C_6=&\frac{4}{\hat{C}_1}+\frac{48r^\tau\hat{C}_2\lambda_{\text{max}}(1+\hat{C}_3)}{\hat{C}_1^2\lambda^2_{\text{min}}(1-\hat{C}_4d_{\text{max}}^2)}\\&+\frac{6\lambda_{\text{max}}(1+\hat{C}_3)(8\mu_zd_{\text{max}}+1)}{\hat{C}_1\lambda^2_{\text{min}}\mu_y(1-\hat{C}_4d_{\text{max}})},\\
C_7=&\frac{108r^\tau\hat{C}_2\lambda_{\text{max}}\mu_z^2(d_{\text{max}}^2\mu_z^2+L^2)}{\lambda^2_{\text{min}}\hat{C}_1(1-\hat{C}_4d_{\text{max}}^2)}+2L^2+8\mu_z^2d_{\text{max}}^2\\&+\frac{18\lambda_{\text{max}}\mu_z^2(d_{\text{max}}^2\mu_z^2+L^2)}{\lambda^2_{\text{min}}\mu_y(1-\hat{C}_4d_{\text{max}}^2)}\left(8\mu_z^2d_{\text{max}}^2+1\right),\\
C_{8}=&8r^\tau\frac{\hat{C}_2}{\hat{C}_1}+\frac{8r^\tau\hat{C}_2\mu_y^2\mu_z^2}{\hat{C}_1(1-\hat{C}_4d_{\text{max}}^2)}\left(\frac{36\lambda_{\text{max}}}{\lambda^2_{\text{min}}\mu_y^2}+\hat{C}_4\right)\\&+\frac{\mu_z^2(8\mu_z^2d_{\text{max}}+1)}{1-\hat{C}_4d_{\text{max}}^2}\left(\hat{C}_4+\frac{36\lambda_{\text{max}}}{\lambda^2_{\text{min}}\mu_y^2}\right),
\end{align*}where $\lambda_{\text{max}}, \lambda_{\text{min}}$ are the largest and the second smallest eigenvalues of the Laplacian respectively and $d_{\text{max}}$ is the largest degree and $\hat{C}_1=1-\frac{4r^\tau}{p_{\text{min}}},\hat{C}_2=\frac{4+3p_{\text{min}}^2-6p_{\text{min}}}{p_{\text{min}}^2}, \hat{C}_3=\frac{2}{p_{\text{min}}},\hat{C}_4=\frac{6\lambda_{\text{max}}}{\lambda^2_{\text{min}}}\left(\frac{r^\tau(4p_{\text{min}}-8r^\tau)(4+3p_{\text{min}}^2-6p_{\text{min}})}{p_{\text{min}}^2(p_{\text{min}}-4r^\tau)}+\frac{2+p_{\text{min}}^2-3p_{\text{min}}}{p_{\text{min}}\mu_y^2}\right)$.
\end{theorem}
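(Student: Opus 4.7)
The plan is to carry out a Lyapunov-style analysis in the spirit of inexact ADMM, adapted to three complications: inexact primal updates via L-BFGS (controlled by $r^\tau$), partial participation (controlled by $p_{\min}$), and non-convexity of the outer problem. The target is a one-step recursion of the form
\begin{align*}
\mathbb{E}\!\left[\mathcal{L}^{t+1}\!+C_5\|e^{t+1}\|^2\right]\leq \mathbb{E}\!\left[\mathcal{L}^{t}\!+C_6\|e^{t}\|^2\right]-\alpha\,\mathbb{E}[V^t],
\end{align*}
where $e^t$ measures the distance of the actual iterate to the exact ADMM primal minimizer. Telescoping from $t=0$ to $T{-}1$, bounding $\mathbb{E}[\mathcal{L}^T]$ from below via Assumption 2 (together with the identity linking the dual variable to the gradient at stationarity, which tames the cross term $y^\top(Ax-Bz)$), and dividing by $T$ yields the $\mathcal{O}(1/T)$ bound. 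The positive constants $C_3,C_4$ end up quantifying how much of $V^t$ can be extracted from the descent, and $C_1=\max\{C_8/C_3,C_7/C_4\}$ is exactly what comes out of converting the descent inequality into a bound on $\sum \mathbb{E}[V^t]$.

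First I would bound $\mathcal{L}^{t+1}-\mathcal{L}^t$ by splitting it into the $x$-descent, the $z$-minimization (closed-form, hence a clean decrease), and the dual ascent (which increases $\mathcal{L}$ by a term proportional to $\mu_y\|Ax^{t+1}-Bz^{t+1}\|^2$, i.e., proportional to $V^t$ up to gradient terms). Since $\mu_z\geq 1+2L>L$, the $x$-subproblem (\ref{simp_x}) is strongly convex, so its exact minimizer $\tilde x_i^{t+1}$ gives a $\frac{2\mu_z-1-2L}{4}\|x_i^{t+1}-x_i^t\|^2$-type drop, while the L-BFGS output satisfies $\|x_i^{t+1}-\tilde x_i^{t+1}\|^2\leq r^\tau \|x_i^t-\tilde x_i^{t+1}\|^2$, which seeds the $r^\tau$ factors in $C_3,\ldots,C_8$. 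Partial participation is handled by conditioning on the activation mask: an inactive agent contributes no update, and each agent is active independently with probability $p_i\geq p_{\min}$, which scales the per-agent decrease and the dual-step expectations, explaining the $p_{\min}$ weights entering $C_4$ and $\hat C_1,\hat C_2,\hat C_3,\hat C_4$.

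Next I would relate the remaining terms to $V^t$ by exploiting the Laplacian structure. The optimality condition of the exact $x$-subproblem gives $\nabla f_i(\tilde x_i^{t+1})+\phi_i^t+\mu_z\sum_{j\in\mathcal N_i}(\tilde x_i^{t+1}-\tfrac{x_i^t+x_j^t}{2})=0$, which, once substituted into the dual update, lets me bound $\sum_i\|\nabla f_i(x_i^{t+1})+\phi_i^{t+1}\|^2$ by a combination of $\|\phi^{t+1}-\phi^t\|^2$, $L^2\|x^{t+1}-\tilde x^{t+1}\|^2$, and Laplacian-weighted disagreements $\sum_{(i,j)\in\mathcal E}\|x_i^{t+1}-x_j^{t+1}\|^2$. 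Inverting on the range of the Laplacian (whose nonzero spectrum lies in $[\lambda_{\min},\lambda_{\max}]$) is what introduces the ratio $\lambda_{\max}/\lambda_{\min}^2$ into every $C_k$. Then I would show an analogous recursion $\|e^{t+1}\|^2\leq \hat C_3 V^t + \hat C_4 d_{\max}^2\|e^t\|^2+\cdots$ for the inexactness term, so that a suitable convex combination with the $\mathcal{L}$-descent yields the monotone Lyapunov $\mathcal{L}^t+C_5\|e^t\|^2$.

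The main obstacle will be simultaneously ensuring $C_3>0$ and $C_4>0$; inspecting their expressions shows that this requires $1-\hat C_4 d_{\max}^2>0$, $\hat C_1>0$, and that the $r^\tau$-weighted and $1/\mu_y^2$-weighted nuisance terms be dominated by the main $\mu_z$ and $p_{\min}(2\mu_z-1-2L)/4$ contributions. The stated hypotheses $\mu_z\geq 1+2L$, $\mu_y\geq 1152 d_{\max}^2\mu_z\lambda_{\max}/(\lambda_{\min}^2 p_{\min})$, and $r^\tau\leq \lambda_{\min}^2 p_{\min}/(4608 d_{\max}^4\mu_z\lambda_{\max})$ are precisely tuned to make each of these ratios at most $1/2$ (up to absolute constants), guaranteeing positivity. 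Once positivity is in hand, the telescoping step is routine, the initial-error mass collects into $C_2\|e^0\|^2$, and the lower bound on $\mathbb{E}[\mathcal{L}^T]$ follows by combining Assumption 2 with the optimality condition $\phi_i^t=-\nabla f_i(\tilde x_i^t)+O(\|e^t\|)$ to show that $y^\top(Ax-Bz)$ is controlled by already-bounded quantities.
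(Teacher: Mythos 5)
Your plan is essentially the paper's own proof: the same three-way decomposition of $\mathcal{L}^{t+1}-\mathcal{L}^{t}$ (dual ascent, $z$-minimization, inexact $x$-step), strong convexity of the local subproblem from $\mu_z>L$, a linear L-BFGS contraction supplying the $r^\tau$ factors, conditioning on the random activation mask to obtain the $p_{\min}$ weights, the Laplacian spectral bound (the paper's Lemma~1) supplying every $\lambda_{\max}/\lambda_{\min}^2$ ratio, positivity of $C_3,C_4$ enforced by the stated choices of $\mu_z,\mu_y,\tau$, and the final conversion $C_1=\max\{C_8/C_3,C_7/C_4\}$ obtained by comparing the descent inequality with the bound on $\sum_t\mathbb{E}[V^t]$.

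One point in your framing would not survive execution as stated: the advertised one-step monotone recursion $\mathbb{E}[\mathcal{L}^{t+1}+C_5\|e^{t+1}\|^2]\leq\mathbb{E}[\mathcal{L}^{t}+C_6\|e^{t}\|^2]-\alpha\,\mathbb{E}[V^t]$ cannot hold with $\mathcal{L}^t+C\|e^t\|^2$ alone as the Lyapunov function. The consensus residual $\|Ax^{t}-Bz^{t}\|^2$ --- which both generates the positive ascent term $\mu_y\|Ax^{t+1}-Bz^{t+1}\|^2$ in the descent estimate and is exactly the second half of $V^t$ --- is controlled in the paper only through a relation involving the \emph{lagged} difference $\|z^{t-1}-z^{t}\|^2$ and error terms that, under partial participation, are stale copies carried over from earlier rounds (inactive agents propagate $e_i^{t-1}$ forward). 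These time-coupled recursions are resolved by summing over $t$ and shifting the residual terms to the left-hand side (which is where the conditions $\hat{C}_1>0$ and $\hat{C}_4 d_{\max}^2<1$, hence the bounds on $\tau$ and $\mu_y$, are actually used), not per step. So the real argument is a comparison of two summed inequalities --- which is precisely what your own description of how $C_1$ arises amounts to --- rather than telescoping of a per-round monotone quantity; equivalently, a per-step Lyapunov would need additional memory terms. Two further cosmetic differences: the paper measures L-BFGS inexactness by gradient norms of the local Lagrangian rather than iterate distances, and it proves the uniform lower bound on $\mathbb{E}[\mathcal{L}^T]$ via the dual-update identity $(y^t)^\top(Ax^t-Bz^t)=\frac{1}{\mu_y}(y^t)^\top(y^t-y^{t-1})$ together with a telescoping-plus-contradiction argument, rather than via the subproblem optimality condition you invoke (note your route would require bounding $\|\nabla f_i\|$, which is not assumed).
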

The acceleration obtained by L-BFGS reflects in the local convergence rate $r$. Generally, L-BFGS has smaller $r$, which makes smaller $C_8, C_7$ and larger $C_3, C_4$ and thus result in smaller $C_1, C_2$, which means faster convergence. The following corollary serves to elucidate the dependencies of the convergence rate upon the conditioning of the local loss functions, the network topology, and activation probabilities.
\newtheorem{corollary}{Corollary}
\begin{corollary}
Following the conditions in Thm.~1 and further choosing $\mu_z = 2L+1, \mu_y=\frac{1152d_{\text{max}}^2\lambda_{\text{max}}\mu_z}{\lambda^2_{\text{min}}p_{\text{min}}}, \tau=\left\lceil\left(\ln{\frac{\lambda^2_{\text{min}}p_{\text{min}}}{4608d_{\text{max}}^4\lambda_{\text{max}}\mu_z}}\right)/\ln r\right\rceil$, it follows that $C_1$ and $C_{2}$ are of $\mathcal{O}\left(\frac{d_{\text{max}}^4L\lambda_{\text{max}}}{\lambda^2_{\text{min}}p_{\text{min}}}\right)$.
\end{corollary}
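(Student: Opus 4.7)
The proof is essentially an exercise in algebraic substitution. The plan is to plug the specified choices of $\mu_z$, $\mu_y$, and $\tau$ — which together fix $r^\tau$ as $\lambda_{\text{min}}^2 p_{\text{min}}/(4608\, d_{\text{max}}^4 \lambda_{\text{max}} \mu_z)$ — into the formulas for $\hat{C}_1,\ldots,\hat{C}_4$ and then into those for $C_3,\ldots,C_8$, and finally assemble $C_1 = \max\{C_8/C_3,\, C_7/C_4\}$ together with $C_2 = C_6 + C_1 C_5$. Every constant in the theorem is a rational expression of the problem parameters, so the computation reduces entirely to tracking orders of magnitude; no inequality beyond Theorem~1 is invoked.

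The first step is to evaluate the auxiliary constants under the prescribed choices. A short calculation shows that $r^\tau \mu_y$ collapses to $1/(4 d_{\text{max}}^2)$ exactly, while $\lambda_{\text{max}}/(\lambda_{\text{min}}^2 \mu_y^2)$ simplifies to a clean quantity of order $\lambda_{\text{min}}^2 p_{\text{min}}^2/(d_{\text{max}}^4 L^2 \lambda_{\text{max}})$. Using these reductions one verifies that $\hat{C}_1 = \Theta(1)$, $\hat{C}_2 = \Theta(1/p_{\text{min}}^2)$, $\hat{C}_3 = \Theta(1/p_{\text{min}})$, and, crucially, $\hat{C}_4 d_{\text{max}}^2 = o(1)$ so that $1 - \hat{C}_4 d_{\text{max}}^2 = \Theta(1)$. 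The numerical constants 1152 and 4608 in the hypotheses of the corollary are calibrated precisely to make these reductions clean.

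The second step is to substitute the orders into $C_3,\ldots,C_8$ and retain the dominant contribution of each. The essential content is that $C_3$ and $C_4$ remain positive and bounded below by quantities of order $\mu_z$ and $p_{\text{min}} \mu_z$ respectively, once the subtractive pieces are absorbed by the leading positive ones — a cancellation forced by the chosen scalings of $\mu_y$ and $r^\tau$. The quantities $C_5, C_6, C_7, C_8$ are upper bounded by collecting worst-case scalings of each summand. Forming the ratios $C_8/C_3$ and $C_7/C_4$ and taking the maximum then yields the stated order $\mathcal{O}\bigl(d_{\text{max}}^4 L \lambda_{\text{max}}/(\lambda_{\text{min}}^2 p_{\text{min}})\bigr)$ for $C_1$; substituting back into $C_2 = C_6 + C_1 C_5$ gives the same order for $C_2$, since $C_5$ and $C_6$ contribute at lower order.

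The main obstacle is bookkeeping: with six interdependent constants, each a sum of several rational terms in roughly a dozen parameters, it is easy to mis-track a factor. The critical technical checkpoint is verifying that the subtractive terms in $C_3$ and $C_4$ are strictly dominated by their positive leading terms under the given scalings — this is exactly what dictates the numerical constants 1152 and 4608 — so that the denominators $C_3, C_4$ are of the expected order rather than collapsing. Once this domination is pinned down, the remaining simplifications are routine and the ratios fall into place.
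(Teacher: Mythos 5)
Your plan — substitute the prescribed $\mu_z,\mu_y,\tau$ into $\hat{C}_1,\ldots,\hat{C}_4$, then into $C_3,\ldots,C_8$, and track orders — is the only available route, and indeed the paper itself supplies no proof of the corollary beyond this implicit substitution. Your preliminary reductions are also correct: $r^\tau\mu_y\le 1/(4d_{\text{max}}^2)$, $\hat{C}_1=\Theta(1)$, $\hat{C}_2=\Theta(1/p_{\text{min}}^2)$, $\hat{C}_3=\Theta(1/p_{\text{min}})$. The gap is that the two facts you yourself flag as the "critical technical checkpoint" are asserted rather than verified, and with the constants exactly as printed in Theorem~1 they are false. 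First, $\hat{C}_4d_{\text{max}}^2$ is not $o(1)$: the dominant piece of $\hat{C}_4$ is of order $\frac{\lambda_{\text{max}}}{\lambda_{\text{min}}^2}\cdot\frac{r^\tau}{p_{\text{min}}^2}$, and plugging $r^\tau\approx\frac{\lambda_{\text{min}}^2p_{\text{min}}}{4608d_{\text{max}}^4\lambda_{\text{max}}\mu_z}$ gives $\hat{C}_4d_{\text{max}}^2\approx\frac{1}{48d_{\text{max}}^2\mu_z p_{\text{min}}}$, which exceeds $1$ whenever $p_{\text{min}}<1/(48d_{\text{max}}^2\mu_z)$ — a regime the corollary's $\mathcal{O}(\cdot/p_{\text{min}})$ claim must cover. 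The structural reason is that $r^\tau$ scales only linearly in $p_{\text{min}}$ while $\hat{C}_2\sim 1/p_{\text{min}}^2$, so $r^\tau\hat{C}_2\sim 1/p_{\text{min}}$ survives.

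Second, and worse, the domination inside $C_3$ and $C_4$ fails even for benign parameters. Take $m=2$ (a single edge), so $\lambda_{\text{min}}=\lambda_{\text{max}}=2$, $d_{\text{max}}=1$, and $p_{\text{min}}=1/2$; then $r^\tau=\frac{1}{4608\mu_z}$, $\mu_y=1152\mu_z$, $\hat{C}_1\approx1$, $\hat{C}_2=7$, $\hat{C}_4\approx 0.018/\mu_z$ (so here $1-\hat{C}_4d_{\text{max}}^2\approx1$), and the third summand of $C_3$ evaluates to
\begin{align*}
\frac{2r^\tau\hat{C}_2\mu_y^2\mu_z^2}{\hat{C}_1(1-\hat{C}_4d_{\text{max}}^2)}\left(\frac{36\lambda_{\text{max}}}{\lambda^2_{\text{min}}\mu_y^2}+\hat{C}_4\right)\approx 4032\,\mu_z^3\cdot\frac{0.018}{\mu_z}\approx 75\,\mu_z^2,
\end{align*}
while the fourth summand is $\approx 21\mu_z^2$, so $C_3\approx\mu_z-96\mu_z^2<0$: the subtractive terms are not absorbed, because the product $r^\tau\hat{C}_2\mu_y^2\hat{C}_4$ retains a factor $\mu_y^2$ that the chosen scalings do not cancel. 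A similar evaluation makes $C_4\approx\frac{\mu_z}{8}-0.04\mu_z^3<0$ once $L\ge 1/2$, and the term $\frac{18\lambda_{\text{max}}\mu_z^2(d_{\text{max}}^2\mu_z^2+L^2)(8\mu_z^2d_{\text{max}}^2+1)}{\lambda_{\text{min}}^2\mu_y}$ in $C_7$ forces $C_7/C_4=\Omega(\mu_z^4d_{\text{max}}^2)=\Omega(L^4d_{\text{max}}^2)$, which is incompatible with the claimed $\mathcal{O}\left(\frac{d_{\text{max}}^4L\lambda_{\text{max}}}{\lambda_{\text{min}}^2p_{\text{min}}}\right)$ dependence on $L$. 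So the corollary cannot be recovered by the bookkeeping you describe from Theorem~1's constants as stated; a complete proof would first have to repair or re-derive those constants (or impose extra restrictions, e.g.\ $p_{\text{min}}\gtrsim 1/(d_{\text{max}}^2\mu_z)$ and a $\tau$ with $r^\tau\propto p_{\text{min}}^2$), and your write-up, by declaring that the cancellation "is forced by the chosen scalings," passes over precisely the step where the argument breaks.
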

\section{experiments}
We evaluate our method on the problem of training a neural network with two fully connected layers with a total of $d=101632$ parameters. We consider a 10-class classification task and use data from the MNIST dataset \cite{lecun1998gradient}, which we distribute randomly across 20 agents. The communication topology is captured by a random graph: each edge is added independently with probability 0.2. In order to initialize the algorithm and simultaneously estimate the Lipschitz constant in Assumption~1, each agent undertakes 20 epochs of local training (on its local data) with a learning rate of 0.1, followed by 10 epochs with a learning rate of $10^{-7}$. The Lipschitz constant estimated by taking the maximum of the expression $\frac{\left \| \nabla f_i (x_i^{(r+1)}) - \nabla f_i (x_i^{(r)})\right\|}{\left \| x_i^{(r+1)} – x_i^{(r)}\right\|}$ over $r$, the counter for local training rounds. We compare our method with two popular baselines, namely gradient tracking (GT) \cite{alghunaim2022unified} and a primal-dual method called ADAPD \cite{mancino2023decentralized}. In addition, we also compare with a variant of our method that uses gradient descent as opposed to L-BFGS as the local solver (we term this as CADEN-GD). For all methods, the local work is set to 5 iterations, except for GT which is designed to use a single step of gradient descent. Since not all methods have dual variable (i.e., we can not compute $V^t$ for every method), we define the following relative error to capture the distance to stationary point $$\left\|\sum_{i=1}^m\nabla f_i(x_i) \right\|^2+\sum_{i=1}^{m-1}\left\|x_i-x_{i+1}\right\|^2.$$Fig.~1.a shows the decrease of the relative error with global round and Fig.~1.b represents the average test accuracy obtained by the local models. It becomes apparent that our method outperforms the baselines. In order to demonstrate the effect of varying workload, we also test a variant of our method that uses 5 local iterations in the first 100 rounds and a single one thereafter (we use -Red to denote this). It can be deduced from Fig.~2.a that such as choice is advantageous for CADEN, which is in support of computation savings as the algorithm progresses. In contrast, this is not the case when GD is used as the local solver (this further illustrates the merit of curvature acceleration). 
\begin{figure}[t]
  \centering
  \begin{subfigure}[b]{0.49\linewidth}
    \includegraphics[width=\textwidth]{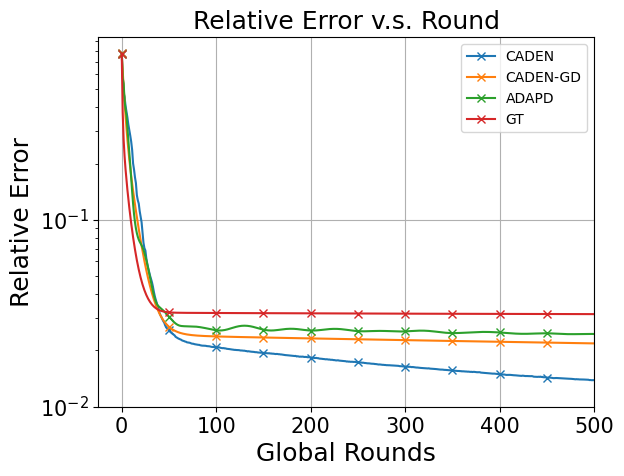}
    \caption{}
  \end{subfigure}
  \begin{subfigure}[b]{0.49\linewidth}
    \includegraphics[width=\textwidth]{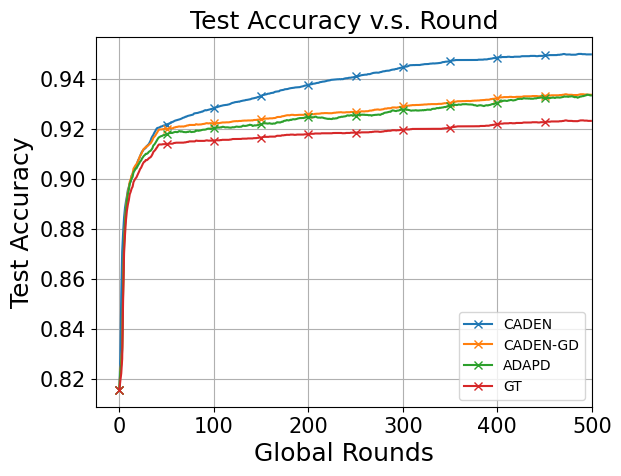}
    \caption{}
  \end{subfigure}\label{vs}
  \caption{Comparison on relative error (a) and test accuracy (b) with baselines.}
  \label{fig1}
  \begin{subfigure}[b]{0.49\linewidth}
    \includegraphics[width=\textwidth]{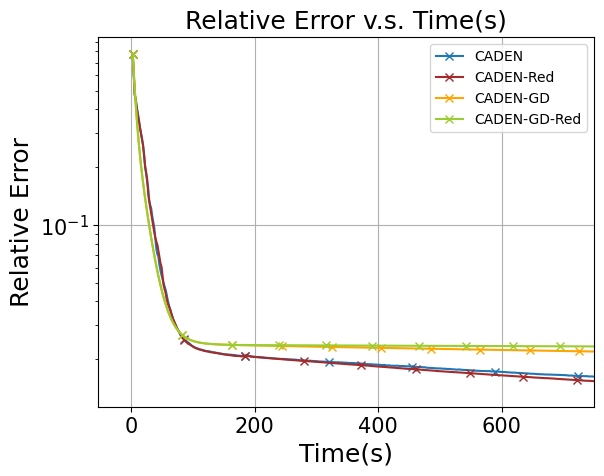}
    \caption{}
  \end{subfigure}
  \begin{subfigure}[b]{0.49\linewidth}
    \includegraphics[width=\textwidth]{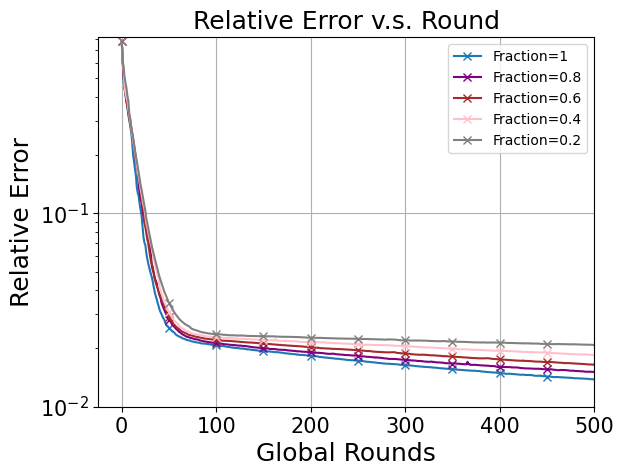}
    \caption{}
  \end{subfigure}
  \caption{Comparison with variants that decrease the number of local iterations after 100 global rounds (a) and for different levels of participation (b).}
  \label{fig2}
\end{figure}
\begin{table}[htbp]
\centering
\begin{adjustbox}{max width=\linewidth}
\begin{tabular}{lcccl}
\toprule
\textbf{Method} & \multicolumn{2}{c}{\textbf{Until 93\% test accuracy}} & & \textbf{Highest accuracy (\%)} \\
\cmidrule(r){2-3}
 & \textbf{Time (s)} & \textbf{Communications} & &  \\
\hline
GT & \texttimes & \texttimes & & 92.34 \\
ADAPD & 626.57 & 29,230 & & 93.35 \\
CADEN(-Red) & \textbf{228.57 (219.06)} & \textbf{8,658 (8,732)} & & \textbf{94.99 (94.82)} \\
CADEN-GD(-Red) & 544.41 (703.17) & 24,938 (33,744) & & 93.37 (93.04) \\
\bottomrule
\end{tabular}
\end{adjustbox}
\caption{Time and communication needed to reach a given test accuracy, and highest accuracy during 500 rounds. \texttimes \,\,in the first line signifies that GT failed to reach the target accuracy.}
\end{table}
Additionally, Table~1 provides detailed evaluation of the time and communication cost (transmission of one vector with dimension $d$ is counted to be 1) needed to reach a target accuracy. Once again, CADEN  depicts substantial savings: 59.8\% for computation time and 65.3\% for communication over baseline with the best performance. Moreover, CADEN-Red achieves even greater savings in computation time compared to CADEN. Finally, Fig. 2.b depicts our method in the presence of partial participation: as expected, higher participation results in faster convergence.

\section{Conclusion}
We have proposed CADEN, a primal-dual method for high-dimensional non-convex distributed optimization. The local problems are solved by L-BFGS, so as to accelerate the algorithm. Besides, partial participation and variable local work are allowed (this is vital for heterogeneous settings).  We have rigorously established sublinear convergence with a rate related to problem conditioning, network topology, and agent activation probabilities (Thm.~1 and Cor.~1). Last but not least, CADEN was shown experimentally to yield substantial communication and computation savings over baselines (Sec.~4).

%\vfill\pagebreak

%\newpage

% References should be produced using the bibtex program from suitable
% BiBTeX files (here: strings, refs, manuals). The IEEEbib.bst bibliography
% style file from IEEE produces unsorted bibliography list.
% -------------------------------------------------------------------------
\bibliographystyle{IEEE.bst}
\bibliography{ref.bib}
\section*{APPENDIX}
The following lemma establishes that consensus with each of the neighbors is equivalent to consensus with the average across neighbors.
\newtheorem{lemma}{Lemma}
\begin{lemma}
For $d_{\text{max}},\lambda_{\text{max}},\lambda_{\text{min}}$ as stated in Thm.~1, the following holds for $\lambda:=\frac{\lambda^2_{\text{min}}}{2\lambda_{\text{max}}}$
\begin{footnotesize}
\begin{align*}
\lambda\norm{Ax^t-Bz^t}^2\leq\sum_{i=1}^m\norm{\sum_{j\in\mathcal{N}_i}\left( x_i^t-z_{ij}^t\right)}^2\leq d_{\text{max}}\norm{Ax^t-Bz^t}^2.
\end{align*}
\end{footnotesize}
\end{lemma}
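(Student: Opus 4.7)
The plan is to exploit the algorithmic identity $z_{ij}^t=\tfrac12(x_i^t+x_j^t)$, which (as noted in Sec.~2.2) follows from the zero initialization of the dual variables together with the inductive identity $y_{ij,i}^t+y_{ij,j}^t=0$. Once this substitution is made, both sides of the stated inequality rewrite purely in terms of $x^t$ and the graph Laplacian $L$ of $G$, so the lemma reduces to a Rayleigh-quotient comparison between $L^2$ and $L$.

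The first step would be algebraic. Using $x_i^t-z_{ij}^t=\tfrac12(x_i^t-x_j^t)$ and the fact that each undirected edge contributes to both endpoints of the double sum $\sum_i\sum_{j\in\mathcal{N}_i}$, one obtains
\begin{align*}
\|Ax^t-Bz^t\|^2 &= \sum_{i=1}^m\sum_{j\in\mathcal{N}_i}\|x_i^t-z_{ij}^t\|^2 = \tfrac{1}{2}(x^t)^\top(L\otimes I_d)x^t,\\
\sum_{i=1}^m\Bigl\|\sum_{j\in\mathcal{N}_i}(x_i^t-z_{ij}^t)\Bigr\|^2 &= \tfrac{1}{4}(x^t)^\top(L^2\otimes I_d)x^t.
\end{align*}

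For the upper bound, a blockwise Cauchy--Schwarz yields $\bigl\|\sum_{j\in\mathcal{N}_i}(x_i^t-z_{ij}^t)\bigr\|^2\leq d_i\sum_{j\in\mathcal{N}_i}\|x_i^t-z_{ij}^t\|^2$; summing and using $d_i\leq d_{\max}$ gives the right-hand inequality (which, as a bonus, does not actually rely on the specific form of $z$). For the lower bound, I would decompose $x^t=x_\parallel+x_\perp$ with $x_\parallel\in\ker L\otimes\mathbb{R}^d$ and $x_\perp$ in its orthogonal complement; since $G$ is connected, $\ker L=\operatorname{span}(\mathbf{1})$, and so the nonzero spectrum of $L$ lies in $[\lambda_{\min},\lambda_{\max}]$. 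Because $Lx_\parallel=0$, one obtains $(x^t)^\top Lx^t\leq\lambda_{\max}\|x_\perp\|^2$ and $(x^t)^\top L^2 x^t\geq\lambda_{\min}^2\|x_\perp\|^2$. Taking the ratio yields $(x^t)^\top L^2 x^t/(x^t)^\top Lx^t\geq\lambda_{\min}^2/\lambda_{\max}$, and substituting back into the identities above gives exactly the left-hand inequality with $\lambda=\lambda_{\min}^2/(2\lambda_{\max})$; the case $x_\perp=0$ is trivial as both sides then vanish.

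The main obstacle is genuinely minor and amounts to bookkeeping: carefully tracking the Kronecker structure and the ordered-vs-unordered edge counting in the reduction to Laplacian quadratic forms, and justifying the restriction of the Rayleigh quotients to the non-null part of $L$. Everything after the substitution $z_{ij}^t=(x_i^t+x_j^t)/2$ is a textbook spectral-gap argument, and no further tricks or assumptions beyond connectedness of $G$ are required.
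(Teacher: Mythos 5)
Your proposal is correct and follows essentially the same route as the paper's proof: both substitute $z_{ij}^t=\tfrac12(x_i^t+x_j^t)$ to reduce the claim to a comparison of the Laplacian quadratic forms $x^\top L x$ and $\|Lx\|^2$ (the paper writes the former as $\|E_s^\top x\|^2$ via the incidence matrix, which is the same quantity), decompose $x$ into its consensus component and orthogonal complement, and apply the spectral bounds $\lambda_{\max}$ and $\lambda_{\min}^2$ on the non-null part, with the identical blockwise Cauchy--Schwarz argument for the upper bound. No gaps; your explicit handling of the $x_\perp=0$ case and the Kronecker bookkeeping are fine.
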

\begin{proof}
Since $z_{ij}^t = \frac{1}{2}\left(x_i^t+x_j^t\right)$, the first inequality is equivalent to $\frac{\lambda}{2}\norm{E_s^\top x^t}^2\leq\norm{\frac{1}{2}Lx^t}^2$ where $E_s$ is the signed graph incidence matrix (with a slight abuse of notation $L$, which means Laplacian here). For any $x$, we do the direct sum decomposition: $x = \bar{x}+\hat{x}$ such that $\bar{x}_1=\cdots = \bar{x}_m$ and $\bar{x}^\top\hat{x}=0$, then
\begin{align*}
    &\frac{\lambda}{2}\left\|E_s^\top x^t\right\|^2 = \frac{\lambda}{2}\left\|E_s^\top \hat{x}^t\right\|^2\leq \frac{\lambda}{2}\lambda_{\text{max}}\left\|\hat{x}^t\right\|^2\\\leq&\frac{\lambda^2_{\text{min}}}{4}\left\|\hat{x}^t\right\|^2\leq\left\|\frac{1}{2}L\hat{x}^t\right\|^2=\left\|\frac{1}{2}Lx^t\right\|^2.
\end{align*}
The second inequality holds because $\left\|\sum_{j\in\mathcal{N}_i}\left(x_i^t-z_{ij}^t\right)\right\|^2\leq d_i\sum_{j\in\mathcal{N}_i}\left\|x_{i}^t-z_{ij}^t\right\|^2$ where $d_i$ is the degree of agent~$i$.  
\end{proof}
\begin{proof}[\textbf{proof for Thm.1}]
The following proof can be roughly divided into twp parts: 1) we evaluate how much the Lagrangian decreases in one round; 2) we proceed to bound the optimality gap $V^t$. First, we have
\begin{align*}
&\mathcal{L}(x^{t+1},z^{t+1},y^{t+1})-\mathcal{L}(x^{t},z^{t},y^{t})\\=&\mathcal{L}(x^{t+1},z^{t+1},y^{t+1})-\mathcal{L}(x^{t+1},z^{t+1},y^{t})\\&+\mathcal{L}(x^{t+1},z^{t+1},y^{t})-\mathcal{L}(x^{t+1},z^{t},y^{t})\\&+\mathcal{L}(x^{t+1},z^{t},y^{t})-\mathcal{L}(x^{t},z^{t},y^{t}).
\end{align*} 
The first term is equal to $\left(y^{t+1}-y^t\right)^\top\left(Ax^{t+1}-Bz^{t+1}\right)=\frac{1}{\mu_y}\norm{y^{t+1}-y^t}^2_2=\mu_y\norm{Ax^{t+1}-Bz^{t+1}}^2$. The second term is equal to $-\mu_z\norm{z^t-z^{t+1}}^2$. By defining the local Lagrangian
\begin{footnotesize}
\begin{align*}
\mathcal{L}_i\left(x_i,y,z\right)=f_i(x_i)+\sum_{j\in\mathcal{N}_i}\left(y_{ij}^\top\left(x_i-z_{ij}\right)+\frac{\mu_z}{2}\left\|x_i-z_{ij}\right\|^2\right),
\end{align*}
\end{footnotesize}
and two local error terms $\hat{e}_i^t:=\nabla_{x_i}\mathcal{L}_i(\hat{x}_i^t,y^{t-1},z^{t-1})$ and $e_i^t:=\nabla_{x_i}\mathcal{L}_i(x_i^t,y^{t-1},z^{t-1})$, we have
\begin{align*}
&\mathcal{L}_i\left(\hat{x}_i^{t+1}, y^t, z^t\right)-\mathcal{L}_i\left(x_i^{t}, y^t, z^t\right)\\
\leq&\langle\nabla f_i(\hat{x}_i^{t+1}),\hat{x}_i^{t+1}-x_i^t\rangle+\frac{L}{2}\norm{\hat{x}_i^{t+1}-x_i^t}^2\\&+\sum_{j\in\mathcal{N}_i}\langle y_{ij,i}^t,\hat{x}_i^{t+1}-x_i^t\rangle\\&+\frac{\mu_z}{2}\langle \hat{x}_i^{t+1}+x_i^t-2z_{ij}^t,\hat{x}_i^{t+1}-x_i^t\rangle\\=&\langle\nabla f_i(\hat{x}_i^{t+1})+\sum_{j\in\mathcal{N}_i}y_{ij,i}^t+\mu_z(\hat{x}_i^{t+1}-z_{ij}^t),\hat{x}_i^{t+1}-x_i^t\rangle\\&+\frac{L}{2}\norm{\hat{x}_i^{t+1}-x_i^t}^2-\frac{\mu_z}{2}\sum_{j\in\mathcal{N}_i}\norm{\hat{x}_i^{t+1}-x_i^t}^2\\\leq&\norm{\hat{e}_i^{t+1}}^2+\frac{1}{4}\norm{\hat{x}_i^{t+1}-x_i^t}^2+\frac{L}{2}\norm{\hat{x}_i^{t+1}-x_i^t}^2\\&-\frac{\mu_z}{2}\sum_{j\in\mathcal{N}_i}\norm{\hat{x}_i^{t+1}-x_i^t}^2.
\end{align*}Here $\hat{x}^t_i$ is the virtual update variable, i.e., 
\begin{align*}
\hat{x}^{t}_i=\texttt{lbfgs}\,&\left(f_i(x_i)+\left(\phi_i^{t-1}\right)^{\top} x_i\right.\\&\left.+\frac{\mu_z}{2}\sum_{j\in\mathcal{N}_i}\norm{x_i-\frac{x_i^{t-1}+x_j^{t-1}}{2}}^2\right).
\end{align*}In other words, $\hat{x}_i^t$ is the local model agent~$i$ can obtain if it implements update in round $t-1$. 
Since agent~$i$ is active with probability $p_i$, we conclude that
\begin{align}
&\mathbb{E}^t\left[\mathcal{L}^{t+1}-\mathcal{L}^{t}\right]\leq\mu_y\norm{Ax^{t+1}-Bz^{t+1}}^2-\mu_z\norm{z^t-z^{t+1}}^2\notag\\&+\norm{\hat{e}^{t+1}}^2-p_{\text{min}}\left(\frac{\mu_z}{2}-\frac{1}{4}-\frac{L}{2}\right)\norm{\hat{x}^{t+1}-x^t}^2.\label{onestep}
\end{align}
Then, we bound $\left\|Ax^t-Bz^t\right\|^2$, from the definitions of $\hat{e}_i^{t+1}$ and $e_i^t$, we have 
 \begin{align*}
    &\hat{e}_i^{t+1}-e_i^t=\\&\nabla f_i(\hat{x}_i^{t+1})+\sum_{j\in\mathcal{N}_i}y_{ij,i}^t+\mu_z(\hat{x}_i^{t+1}-z_{ij}^t)\\&-\left(\nabla f_i(x_i^{t})+\sum_{j\in\mathcal{N}_i}y_{ij,i}^{t-1}+\mu_z(x_i^{t}-z_{ij}^{t-1})\right)\\=&\nabla f_i(\hat{x}_i^{t+1})-\nabla f_i(x_i^{t})+\sum_{j\in\mathcal{N}_i}\left(y_{ij,i}^{t}-y_{ij,i}^{t-1}+\right.\\&\left.\mu_z(\hat{x}_i^{t+1}-x_i^t-z_{ij}^t+z_{ij}^{t-1})\right)\\=&\nabla f_i(\hat{x}_i^{t+1})-\nabla f_i(x_i^{t})+\sum_{j\in\mathcal{N}_i}\left(\mu_y(x_i^{t}-z_{ij}^{t})+\right.\\&\left.\mu_z(\hat{x}_i^{t+1}-x_i^t-z_{ij}^t+z_{ij}^{t-1})\right).
\end{align*}By rearranging the terms, we have
\begin{align*}
&\mu_y^2\sum_i\norm{\sum_{j\in\mathcal{N}_i}x_i^{t}-z_{ij}^{t}}^2\leq 3\norm{e^t}^2+3\norm{\hat{e}^{t+1}}^2\\&+\left(9d_{\text{max}}^2\mu_z^2+9L^2\right)\norm{x^t-\hat{x}^{t+1}}^2+18\mu_z^2\norm{z^{t-1}-z^t}^2.
\end{align*}
By invoking Lemma~1 and adding from 1 to $T$ we get
\begin{align}
&\sum_{t=1}^T\norm{Ax^t-Bz^t}^2\leq\frac{2\lambda_{\text{max}}}{\lambda_{\text{min}}^2\mu_y^2}\sum_{t=1}^T\left(3\norm{e^t}^2+3\norm{\hat{e}^{t+1}}^2\right.\notag\\&\left.+18\mu_z^2\norm{z^{t-1}-z^t}^2+\left(9d_{\text{max}}^2\mu_z^2+9L^2\right)\norm{x^t-\hat{x}^{t+1}}^2\right).\label{sumx_z}
\end{align}
We now proceed to bound $e^t$ in relation to the local iteration number. From \cite{liu1989limited}, there exists some $r\in (0,1)$ such that
$$\norm{\nabla_{x_i} L_i\left(\hat{x}_i^{t+1},z^t,y^t\right)}^2\leq r^\tau \norm{\nabla_{x_i} L_i\left(x_i^{t},z^t,y^t\right)}^2.$$Since $\nabla_{x_i} L_i\left(x_i^{t},z^t,y^t\right)=\nabla_{x_i} L_i\left(x_i^{t},z^{t-1},y^{t-1}\right)+\\\sum_j\left(\left(y_{ij,i}^t-y_{ij,i}^{t-1}\right)+\mu_z\left(z_{ij}^{t-1}-z_{ij}^{t}\right)\right)$, by defining $\kappa_i^t := \sum_{j\in\mathcal{N}_i}\left(\left(y_{ij,i}^t-y_{ij,i}^{t-1}\right)+\mu_z\left(z_{ij}^{t-1}-z_{ij}^{t}\right)\right)$ we obtain
\begin{align}
&\norm{\hat{e}_i^{t+1}}^2\leq2r^\tau\left(\norm{e_i^{t}}^2+\left\|\kappa_i^t\right\|^2\right).\label{error_decay}
\end{align}Note that $e_i^t=\hat{e}_i^t$ with probability $p_i$, we have
\begin{align}
    &\mathbb{E}^{t-1}\left[\left\|e_i^t\right\|^2\right]\leq p_i\left\|\hat{e}_i^t\right\|^2+(1-p_i)(1+\xi)\left\|e_i^{t-1}\right\|^2\notag\\&+(1-p_i)(1+\frac{1}{\xi})\left\|\kappa_i^{t-1}\right\|^2\label{rela1}
\end{align}for any $\xi>0$. Substitute it into (\ref{error_decay}) and take expectation over all rounds, we have
\begin{align*}
    &\sum_{t=1}^T\mathbb{E}\left\|\hat{e}_i^{t+1}\right\|^2\leq \frac{2r^\tau p_i}{1-(1-p_i)(1+\xi)}\sum_{t=0}^T\mathbb{E}\left\|\hat{e}_i^{t}\right\|^2\\+&2r^\tau\left(1+\frac{(1-p_i)(1+\frac{1}{\xi})}{1-(1-p_i)(1+\xi)}\right)\sum_{t=1}^T\kappa_i^t.
\end{align*}Choosing large enough $\tau$ such that $\frac{2r^\tau p_i}{1-(1-p_i)(1+\xi)}<1$, we have
\begin{align}
\sum_{t=1}^T\mathbb{E}\left\|\hat{e}_i^{t+1}\right\|^2\leq\frac{1}{\hat{C}_1}\left\|e_i^0\right\|^2+2r^\tau\frac{\hat{C}_2}{\hat{C}_1}\sum_{t=1}^T\left\|\kappa_i^t\right\|^2,\label{sum_hat}
\end{align}where $\hat{C}_1=1-\frac{2r^\tau}{1-(1-p_{\text{min}})(1+\xi)},\hat{C}_2=1+\frac{(1-p_{\text{min}})(1+1/\xi)}{1-(1-p_{\text{min}})(1+\xi)}$. Then from (\ref{rela1}), we have
\begin{align*}
    &\sum_{t=1}^T\mathbb{E}\left\|e_i^t\right\|^2\leq p_i \sum_{t=1}^T\mathbb{E}\left\|\hat{e}_i^t\right\|^2\\+&(1-p_i)\sum_{t=1}^T\left((1+\xi)\mathbb{E}\left\|e_i^{t-1}\right\|^2+(1+\frac{1}{\xi})\left\|\kappa_i^{t-1}\right\|^2\right).
\end{align*}Choosing $\xi$ such that $\xi<\frac{p_{\text{min}}}{1-p_{\text{min}}}$, by defining $\hat{C}_3=\frac{1}{p_{\text{min}}+p_{\text{min}}\xi-\xi}$, we have
\begin{align}
    \sum_{t=1}^T\mathbb{E}\left\|e_i^t\right\|^2\leq \hat{C}_3 \sum_{t=1}^T\mathbb{E}\left\|\hat{e}_i^t\right\|^2+\frac{(1-p_i)(1+\xi)}{\xi}\sum_{t=0}^{T-1}\left\|\kappa_i^{t}\right\|^2.\label{sum_nohat}
\end{align}
Substitute (\ref{sum_hat}) and (\ref{sum_nohat}) into (\ref{sumx_z}), we obtain
\begin{align*}
    &\sum_{t=1}^T\mathbb{E}\left\|Ax^t-Bz^t\right\|^2\leq\frac{6\lambda_{\text{max}}}{\lambda_{\text{min}}^2\mu_y^2}\sum_{t=1}^T\left(6\mu_z^2\mathbb{E}\left\|z^t-z^{t-1}\right\|^2\right.\\&\left.+(3d_{\text{max}}^2\mu_z^2+3L^2)\mathbb{E}\left\|x^t-\hat{x}^{t+1}\right\|^2\right)+\\&(1+\hat{C}_3)\frac{6\lambda_{\text{max}}}{\lambda_{\text{min}}^2\mu_y^2}\left(\frac{1}{\hat{C}_1}\left\|e^0\right\|^2+2r^\tau\frac{\hat{C}_2}{\hat{C}_1}\sum_{t=1}^T\left\|\kappa^t\right\|^2\right)+\\&\frac{6\lambda_{\text{max}}}{\lambda_{\text{min}}^2\mu_y^2}\frac{(1-p_{\text{min}})(1+\xi)}{\xi}\sum_{t=0}^{T-1}\left\|\kappa^t\right\|^2.
\end{align*}Recall the definition of $\kappa_i^t$, we have
\begin{align*}
    &\sum_{t=1}^T\mathbb{E}\left\|Ax^t-Bz^t\right\|^2\leq \frac{6\lambda_{\text{max}}(1+\hat{C}_3)}{\hat{C}_1\lambda_{\text{min}}^2\mu_y^2}\left\|e^0\right\|^2\\&+\frac{36\lambda_{\text{max}}\mu_z^2}{\lambda_{\text{min}}^2\mu_y^2}\sum_{t=1}^T\mathbb{E}\left\|z^{t-1}-z^t\right\|^2\\&+\frac{18\lambda_{\text{max}}\mu_z^2}{\lambda_{\text{min}}^2\mu_y^2}(d_{\text{max}}^2\mu_z^2+L^2)\sum_{t=1}^T\mathbb{E}\left\|x^t-\hat{x}^{t+1}\right\|^2\\&+\hat{C}_4\sum_{t=1}^T\mathbb{E}\left(d_{\text{max}}^2\left\|Ax^t-Bz^t\right\|^2+\mu_z^2\left\|z^{t-1}-z^t\right\|^2\right),
\end{align*}where $\hat{C}_4=\frac{6\lambda_{\text{max}}}{\lambda_{\text{min}}^2\mu_y^2}\left(2r^\tau\mu_y^2\frac{\hat{C}_2(1+\hat{C}_1)}{\hat{C}_1}+\frac{(1-p_{\text{min}})(1+\xi)}{\xi}\right)$. By shifting the $\left\|Ax^t-Bz^t\right\|^2$-term on the right-hand side to the left and choosing suitable parameters such that $\hat{C}_4d_{\text{max}}^2<1$, we have
\begin{align}
    &\sum_{t=1}^T\mathbb{E}\left\|Ax^t-Bz^t\right\|^2\leq\frac{6\lambda_{\text{max}}(1+\hat{C}_3)}{\hat{C}_1\lambda_{\text{min}}^2\mu_y^2(1-\hat{C}_4d_{\text{max}}^2)}\left\|e^0\right\|^2\notag\\&+\frac{18\lambda_{\text{max}}\mu_z^2(d_{\text{max}}^2\mu_z^2+L^2)}{\lambda_{\text{min}}^2\mu_y^2(1-\hat{C}_4d_{\text{max}}^2)}\sum_{t=1}^T\mathbb{E}\left\|x^t-\hat{x}^{t+1}\right\|^2+\notag\\&\frac{\mu_z^2}{1-\hat{C}_4d_{\text{max}}^2}\left(\frac{36\lambda_{\text{max}}}{\lambda_{\text{min}}^2\mu_y^2}+\hat{C}_4\right)\sum_{t=1}^T\mathbb{E}\left\|z^{t-1}-z^t\right\|^2.\label{clear_sumx_z}
\end{align}
Substituting (\ref{clear_sumx_z}) into (\ref{sum_hat}), we have
\begin{align}
    &\sum_{t=1}^T\mathbb{E}\left\|\hat{e}^{t+1}\right\|^2\leq\frac{1}{\hat{C}_1}\left\|e^0\right\|^2+2r^\tau\frac{\hat{C}_2}{\hat{C}_1}\mu_z^2\sum_{t=1}^T\mathbb{E}\left\|z^{t-1}-z^t\right\|^2\notag\\&+\frac{12r^\tau\hat{C}_2\lambda_{\text{max}}(1+\hat{C}_3)}{\hat{C}_1^2\lambda_{\text{min}}^2(1-\hat{C}_4d_{\text{max}}^2)}\left\|e^0\right\|^2+\notag\\&\frac{36r^\tau\hat{C}_2\lambda_{\text{max}}\mu_z^2(d_{\text{max}}^2\mu_z^2+L^2)}{\lambda_{\text{min}}^2\hat{C}_1(1-\hat{C}_4d_{\text{max}}^2)}\sum_{t=1}^T\mathbb{E}\left\|x^t-\hat{x}^{t+1}\right\|^2+\notag\\&\frac{2r^\tau\hat{C}_2\mu_y^2\mu_z^2}{\hat{C}_1(1-\hat{C}_4d_{\text{max}}^2)}\left(\frac{36\lambda_{\text{max}}}{\lambda_{\text{min}}^2\mu_y^2}+\hat{C}_4\right)\sum_{t=1}^T\mathbb{E}\left\|z^{t-1}-z^t\right\|^2.\label{clear_hat}
\end{align}
Now we choose $\xi=\frac{p_{\text{min}}}{2(1-p_{\text{min}})}$ (without loss of generality, here we assume that $p_{\text{min}}<1$; if $p_{\text{min}}=1$, what in Thm.~1 still holds since $1-p_{\text{min}}$ will not appear in denominators of parameters by abbreviation), by adding (\ref{onestep}) from $0$ to $T-1$ and substituting (\ref{clear_hat}) further gives
\begin{align}
\frac{1}{T}\sum_{t=0}^T C_3\mathbb{E}\norm{z^t-z^{t-1}}^2+C_4\mathbb{E}\norm{\hat{x}^{t}-x^{t-1}}^2\notag\\\leq\frac{1}{T}\mathbb{E}\left(\mathcal{L}^0-\mathcal{L}^{T}\right)+\frac{1}{T}C_5\mathbb{E}\norm{e^0}^2,\label{result1}
\end{align}where $C_3, C_4, C_5$ are as stated in Thm.~1
We continue by bounding the optimality gap $V^t$. First, we have
\begin{align*}
    &\sum_{i=1}^m\left\|\nabla f_i(x_i^t)+\phi_i^t\right\|^2\\=&\sum_{i=1}^m\left\|\nabla f_i(\hat{x}_i^{t+1})-\nabla f_i(\hat{x}_i^{t+1})+\nabla f_i(x_i^t)+\phi_i^t\right\|^2\\\leq&2L^2\left\|\hat{x}^{t+1}-x^t\right\|^2+2\sum_{i=1}^m\left\|\nabla f_i(\hat{x}_i^{t+1})+\phi_i^t\right\|^2\\\leq&2L^2\left\|\hat{x}^{t+1}-x^t\right\|^2+4\left\|\hat{e}^{t+1}\right\|^2+8\mu_z^2d_{\text{max}}^2\left\|\hat{x}^{t+1}-x^t\right\|^2\\&+8\sum_{i=1}^m\left\|\mu_z\sum_{j\in\mathcal{N}_i}x_i^{t}-z_{ij}^t\right\|^2.
\end{align*}So for $V^t$, we have
\begin{align*}
    V^t\leq& 4\left\|\hat{e}^{t+1}\right\|^2+\left(2L^2+8\mu_z^2d_{\text{max}}^2\right)\left\|\hat{x}^{t+1}-x^{t}\right\|^2\\&+\left(8\mu_z^2d_{\text{max}}+1\right)\left\|Ax^t-Bz^t\right\|^2.
\end{align*}Again, adding from $0$ to $T-1$, taking expectation and substituting (\ref{sumx_z}) into it, we have
\begin{align}
\sum_{t=0}^{T-1}\mathbb{E}\left[V^t\right]\leq& C_6\norm{e^0}^2+\sum_{t=0}^{T-1}C_7\mathbb{E
}\norm{\hat{x}^{t+1}-x^t}^2\notag\\+&\sum_{t=0}^{T-1}C_8\mathbb{E}\norm{z^{t-1}-z^t}^2\label{result2}
\end{align}
where $C_6, C_7, C_8$ are as stated in Thm.~1. Compare (\ref{result1}) and (\ref{result2}), we have
\begin{align*}
\frac{1}{T}\sum_{t=0}^{T-1}\mathbb{E}\left[V^t\right]\leq&\frac{1}{T}\text{max}\left\{\frac{C_8}{C_3},\frac{C_7}{C_4}\right\}\mathbb{E}\left(\mathcal{L}^0-{\mathcal{L}}^{T}\right)\\&+\frac{1}{T}\left(C_6+\text{max}\left\{\frac{C_8}{C_3},\frac{C_7}{C_4}\right\}C_5\right)\norm{e^0}^2
\end{align*}
Finally, we prove the that $\mathbb{E}\left[\mathcal{L}^T\right]$ is uniformly lower bounded. We have
\begin{align*}
    &\mathcal{L}^t-F^\star\geq (y^t)^\top(Ax^t-Bz^t)+\frac{\mu_z}{2}\left\|Ax^t-Bz^t\right\|^2\\\geq& (y^t)^\top(Ax^t-Bz^t)=\frac{1}{\mu_y}\left(y^t\right)^\top\left(y^t-y^{t-1}\right)\\=&\frac{1}{\mu_y}\left(\left\|y^t\right\|^2-\left\|y^{t-1}\right\|^2+\left\|y^t-y^{t-1}\right\|^2\right)\\\geq&\frac{1}{\mu_y}(\left\|y^t\right\|^2-\left\|y^{t-1}\right\|^2).
\end{align*}
Therefore, $\sum_{t=1}^T\left(\mathcal{L}^t-F^\star\right)\geq -\frac{1}{\mu_y}\left\|y^0\right\|^2$ for all $T$. Form (\ref{result1}), we have
\begin{align*}
    \mathbb{E}\left[\mathcal{L}^T\right]\leq&\mathcal{L}^0+C_5\left\|e^0\right\|^2\\&-\sum_{t=0}^T\left(C_3\mathbb{E}\left\|z^t-z^{t-1}\right\|^2+C_4\mathbb{E}\left\|\hat{x}^t-x^{t-1}\right\|^2\right),
\end{align*}which means $\sum_{t=0}^T\left(\mathbb{E}\left\|z^t-z^{t-1}\right\|^2+\mathbb{E}\left\|\hat{x}^t-x^{t-1}\right\|^2\right)$ is uniformly upper bounded over $T$. Then (\ref{onestep}) together with (\ref{clear_sumx_z}) and (\ref{clear_hat}) yield that $\mathbb{E}\left[\mathcal{L}^{T_1}-\mathcal{L}^{T_2}\right]$ is uniformly upper bounded for all $T_1\geq T_2$. Therefore, one can obtain that $\mathbb{E}\left[\mathcal{L}^T\right]$ is lower bounded by contradiction and we finish the proof.
\end{proof}
\end{document}